\documentclass{amsart}
\usepackage{ifpdf}
\usepackage{amsmath, amsthm, amsfonts, amssymb, amscd}
\usepackage[active]{srcltx}
\usepackage[pdftex]{graphicx}
\usepackage{color} 
    \ifpdf

      \usepackage[pdftex]{graphicx}  

      \usepackage[pdftex]{hyperref}

    \else

      \usepackage[dvips]{graphicx}  

      \newcommand{\href}[2]{#2}

    \fi



\newcommand{\abs}[1]{\left\lvert{#1}\right\rvert}
\newcommand{\norm}[1]{\left\|{#1}\right\|}


\DeclareMathOperator{\SL}{\rm{SL}}
\DeclareMathOperator{\inter}{\rm{int}}
\DeclareMathOperator{\bd}{\partial}
\DeclareMathOperator{\cl}{cl}

\DeclareMathOperator{\diff}{\rm{Diff}}


\newcommand{\ol}{\overline}
\renewcommand{\hat}{\widehat}
\newcommand{\til}{\widetilde}

\newcommand{\R}{\mathbb{R}}\newcommand{\N}{\mathbb{N}}
\newcommand{\Z}{\mathbb{Z}}
\newcommand{\T}{\mathbb{T}}
\newcommand{\D}{\mathbb{D}}
\renewcommand{\SS}{\mathbb{S}}

\newcommand{\sm}{\setminus}

\newcommand{\id}{\mathrm{Id}}

\newcommand{\Leb}{\mathrm{Leb}}


\newcommand{\ie}{i.e.\ }

\newtheorem{theorem}{Theorem}

\newtheorem{lemma}[theorem]{Lemma}
\newtheorem{proposition}[theorem]{Proposition}

\newtheorem*{theorem*}{Theorem}

\theoremstyle{definition}

\theoremstyle{remark}

\title[Irrotational diffeomorphisms with sublinear diffusion]
{Area-preserving irrotational diffeomorphisms of the torus with sublinear diffusion}
\author{Andres Koropecki}
\address{Universidade Federal Fluminense, Instituto de Matem\'atica e Estat\'\i stica, Rua M\'ario Santos Braga S/N, 24020-140 Niteroi, RJ, Brasil}
\email{ak@id.uff.br}
\author{Fabio Armando Tal}
\address{Instituto de Matem\'atica e Estat\'\i stica, Universidade de S\~ao Paulo, Rua do Mat\~ao 1010, Cidade Universit\'aria, 05508-090 S\~ao Paulo, SP, Brazil}
\email{fabiotal@ime.usp.br}
\thanks{The first author was partially supported by CNPq-Brasil. The second author was partially supported by FAPESP and CNPq-Brasil}

\begin{document}

\begin{abstract} We construct a $C^\infty$ area-preserving diffeomorphism of the two-dimensional torus which is Bernoulli (in particular, ergodic) with respect to Lebesgue measure, homotopic to the identity, and has a lift to the universal covering whose rotation set is $\{(0,0)\}$, which in addition has the property that almost every orbit by the lifted dynamics is unbounded and accumulates in every direction of the circle at infinity.
\end{abstract}

\maketitle

\section{Introduction}
Consider the torus $\T^2 = \R^2/\Z^2$ with covering projection $\pi\colon \R^2\to \Z^2$, an area-preserving homeomorphism $f\colon \T^2\to \T^2$ homotopic to the identity, and a lift $\hat{f}\colon \R^2\to \R^2$ of $f$. The \emph{rotation set} $\rho(\hat{f})$ of $\hat{f}$, introduced by Misiurewicz and Ziemian in \cite{m-z} as a generalization of the rotation number of an oritentation-preserving circle homeomorphism, is defined as the set of all limits of sequences of the form
$$v = \lim_{k\to \infty} (\hat{f}^{n_k}(z_k)-z_k)/n,$$
where $(n_k)_{k\in \N}$ is a sequence of integers such that $n_k\to \infty$ as $k\to \infty$. Roughly speaking, this set measures the average asymptotic rotation of orbits. It is known that $\rho(\hat{f})$ is always compact and convex, and if $v\in \rho(\hat{f})$, is extremal or interior in $\rho(\hat{f})$, then there is $z\in \R^2$ such that the pointwise rotation vector
\begin{equation}\rho(\hat{f},z)=\label{eq:lim} \lim_{n\to \infty} (\hat{f}^n(z)-z)/n \end{equation}
exists and coincides with $v$.
Moreover, the Ergodic Theorem implies that the limit (\ref{eq:lim}) exists for almost every $z\in \R^2$.

Another interpretation of the rotation set is as a measure of the \emph{linear} rate of diffusion of orbits in the universal covering: how fast and in which directions do orbits drift away. One should emphasize the ``linear'' part, because points that separate from each other at a sublinear rate (e.g. with distance increasing as $\sqrt{n}$) are not differentiated in terms of their rotation vectors. In particular, from the perspective of tthe rotation set, orbits drifting away from the origin at a sublinear rate are undistinguishable from fixed points.

If $\rho(\hat{f}) = \{(0,0)\}$, we say  $f$ is \emph{irrotational} and $\hat{f}$ is its irrotational lift. In this case, it is easy to see that $\rho(\hat{f},z)$ exists for \emph{all} $z\in \R^2$ and is equal to $(0,0)$ (moreover, the limit (\ref{eq:lim}) converges uniformly).
A natural question that arises is whether an irrotational homeomorphism can exhibit sublinear diffusion. For instance, it is well-known and easy to verify that there is no such phenomenon in dimension one: if $F\colon \R\to \R$ is a lift of an orientation preserving homeomorphism of $\T^1=\R/\Z$ and its rotation number is $0$, then $\abs{\smash{F^n(x)-x}}$ is uniformly bounded (a property known as \emph{uniformly bounded deviations}).
A similar property was proved in \cite{ABT} for homeomorphisms of $\T^2$ in the homotopy class of a Dehn twist $(x,y)\mapsto (x+y,x)$.

It is conjectured that the same property holds in the horizontal direction if one assumes that $\rho(\hat{f})$ is an interval such as $\{0\}\times [a,b]$, with $a<0<b$: namely, that $\hat{f}^n(z)-z$ is uniformly bounded in the horizontal direction. Progress in this direction has been made by D\'avalos \cite{davalo}.

In this article we provide an example of an area-preserving $C^\infty$ ergodic diffeomorphism which is irrotational but exhibits sublinear diffusion for almost every point and in all possible directions. Before stating our main theorem, let us give a definition: given a set $X\subset \R^2$, we say that $X$ accumulates in the direction $v\in \SS^1$ at infinity if there is a sequence $\{x_n\}_{n\geq 0}$ in $X$ such that 
$$\lim_{n\to \infty}\norm{x_n} = \infty \quad \text{ and }\quad \lim_{n\to\infty} \frac{x_n-x_0}{\norm{x_n-x_0}}=v.$$
The boundary of $X$ at infinity is defined as the set $\bd_\infty X\subset \SS^1$ consisting of all $v\in \SS^1$ such that $X$ accumulates in the direction $v$ at infinity.

\begin{theorem} \label{th:example} There is a $C^\infty$ area-preserving diffeomorphism $f\colon \T^2\to \T^2$ homotopic to the identity, with a lift $\hat{f}\colon \R^2\to \R^2$ such that
\begin{itemize}
\item $\rho(\hat{f})=\{(0,0)\}$ (\ie $f$ is irrotational and $\hat{f}$ is the irrotational lift);
\item $f$ is metrically isomorphic to a Bernoulli shift (in particular, $f$ is ergodic) with Lebesgue measure;
\item For Lebesgue almost every point $\hat{x}\in \R^2$, the forward and backward orbits of $\hat{x}$ accumulates in every direction at infinity, \ie $$\bd_\infty \{\hat{f}^n(\hat{x}):n\in \N\}= \SS^1=\bd_\infty\{\hat{f}^{-n}(\hat{x}):n\in \N\}.$$
Moreover, the forward and backward orbits of $\hat{x}$ visit every fundamental domain $[0,1]^2+v$, with $v\in \Z^2$.
\end{itemize} 
\end{theorem}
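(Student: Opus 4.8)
The plan is to split the problem: first reduce the orbit statements to an ergodic‑theoretic fact about the lift, and then build the diffeomorphism.

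\emph{Reduction.} Suppose we have produced a $C^\infty$ area-preserving diffeomorphism $f\colon\T^2\to\T^2$, homotopic to the identity, that is Bernoulli with respect to $\Leb$, whose displacement map $\phi(x)=\hat f(\tilde x)-\tilde x$ (a continuous $\Z^2$-periodic function into $\R^2$) satisfies $\int_{\T^2}\phi\,d\Leb=(0,0)$, that has $\rho(\hat f)=\{(0,0)\}$, and whose integer-translation cocycle is non-degenerate (its group of essential values is all of $\Z^2$). Viewed as a measure-preserving transformation of the infinite-measure space $(\R^2,\Leb)$, the lift $\hat f$ is then measure-theoretically isomorphic to the $\Z^2$-extension of the Bernoulli base $(f,\Leb)$ by a bounded, mean-zero, $\Z^2$-valued cocycle. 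By the recurrence theorem for mean-zero $\Z^d$-valued cocycles over ergodic systems with $d\le2$ (Atkinson's recurrence theorem and its planar version), $\hat f$ is conservative; by non-degeneracy of the cocycle it is moreover ergodic (theory of essential values); and the same holds for $\hat f^{-1}$, the lift of $f^{-1}$. A conservative ergodic transformation has the property that a.e. orbit meets every positive-measure set infinitely often, so, applying this to the countable family of fundamental domains $[0,1]^2+v$, $v\in\Z^2$, we conclude that for $\Leb$-a.e. $\tilde x$ the forward and backward $\hat f$-orbits of $\tilde x$ meet every $[0,1]^2+v$. Finally, if a set $X$ meets every fundamental domain, then choosing $v_k\in\Z^2$ with $\norm{v_k}\to\infty$ and $v_k/\norm{v_k}\to w$ and picking $x_k\in X\cap([0,1]^2+v_k)$ gives $\norm{x_k-x_0}\to\infty$ and $(x_k-x_0)/\norm{x_k-x_0}\to w$, so $\bd_\infty X=\SS^1$. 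Thus the third bullet follows, the first is $\rho(\hat f)=\{(0,0)\}$, the second is Bernoulli-ness, and everything reduces to constructing such an $f$.

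\emph{Construction of $f$.} I would build $f$ by an iterative scheme in the spirit of the approximation-by-conjugation (Anosov--Katok) method. Start from a model $f_0$ carrying the hyperbolic structure responsible for chaoticity, organized — following Katok's examples — around a single neutral (parabolic) fixed point $p$, and irrotational (every invariant measure of $f_0$ has zero rotation vector; true, e.g., for an appropriate model with bounded deviations and a reversing symmetry). Pin $p$ down in the cover: require that the chosen lift $\tilde p$ be fixed by $\hat f_0$, so that the grid $\tilde p+\Z^2$ consists entirely of fixed points of $\hat f_0$. Then perform an infinite sequence of area-preserving surgeries/conjugations $f_{n+1}=h_n^{-1}\circ\Phi_n\circ h_n$, each designed to push the unstable manifold $W^u(\tilde p)$ — equivalently, the orbit of a typical point — through one further shell of fundamental domains, with the $\Phi_n$ taken hyperbolicity-compatible and the conjugating maps $h_n$ chosen so that (i) a Pesin/dominated structure survives to the limit, and (ii) the excursions reaching distant fundamental domains have return times to a fixed bounded region growing \emph{super-linearly}, whence $\sup_{\tilde x}\norm{\hat f_n^m(\tilde x)-\tilde x}=o(m)$ uniformly at every stage. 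Taking the $h_n$ to converge fast and their supports to shrink fast, $f=\lim f_n$ is a $C^\infty$ area-preserving diffeomorphism.

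\emph{Verification and main obstacle.} Bernoulli-ness of $f$ should follow because the hyperbolic/Pesin structure of $f_0$ persists (the surgeries being tame relative to it, and $p$ the only neutral point), so that nonuniform hyperbolicity with a single full-measure ergodic component holds, yielding the Bernoulli property (via Pesin theory); zero mean of $\phi$ and non-degeneracy of the integer-translation cocycle are arranged directly (a reversing symmetry forces the mean rotation vector to vanish, and the excursions are made genuinely two-dimensional); and $\rho(\hat f)=\{(0,0)\}$ — equivalent to $\sup_{\tilde x}\norm{\hat f^m(\tilde x)-\tilde x}=o(m)$ — is exactly what the super-linear growth of return times buys. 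The main obstacle is maintaining this last property throughout the iteration: irrotationality is not a closed condition of countable codimension, it is sensitive to every periodic orbit and every invariant set and is unstable under perturbation, while the Bernoulli property forces an abundance of horseshoes (Katok) and the spreading mechanism naturally tends to create linear drift. The delicate point is therefore to decouple where the hyperbolicity lives from where the unbounded drift is produced — keeping the chaotic part of the dynamics confined, in the universal cover, to sublinearly growing regions while still driving $\Leb$-a.e. orbit to diffuse through every fundamental domain and hence in every direction at infinity.
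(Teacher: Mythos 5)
Your proposal has two genuine gaps, one in the reduction and one in the construction. In the reduction, the ``planar version'' of Atkinson's recurrence theorem that you invoke does not exist: Atkinson's theorem (mean zero implies recurrence of the cocycle over an ergodic base) is a strictly one-dimensional result, and for $\Z^2$-valued cocycles there are standard examples of bounded, mean-zero, non-recurrent cocycles over ergodic transformations. Recurrence in dimension two requires additional input (e.g.\ a CLT with $\sqrt{n}$ normalization, or Schmidt-type conditions), and ergodicity of the $\Z^2$-extension requires you to actually compute the group of essential values --- both of which you simply posit can be ``arranged directly.'' In the construction, you yourself identify the fatal tension --- producing Bernoulli behaviour and genuinely two-dimensional drift while keeping $\rho(\hat f)=\{(0,0)\}$ --- and leave it unresolved; the Anosov--Katok-style scheme is a research programme, not a proof, and no mechanism is given for why the limit diffeomorphism would remain irrotational or Bernoulli.

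It is worth contrasting this with how the paper dissolves exactly that tension by a purely topological device. One constructs an open, simply connected set $\hat U\subset\R^2$ that is disjoint from all its nonzero integer translates, meets every fundamental domain $[0,1]^2+v$, and projects to a full-measure disk $U\subset\T^2$; then one glues Katok's Bernoulli diffeomorphism of the disk into $U$ (identity on the complement) via a volume-preserving uniformization $\D\to\hat U$. The lift $\hat f$ preserves $\hat U$, so ergodicity gives a.e.\ orbit dense in $\hat U$, and density in a set meeting every fundamental domain yields the third bullet without any infinite ergodic theory --- indeed the displacement cocycle here is a (measurable, unbounded) coboundary and the lift is \emph{not} ergodic on $\R^2$, so the skew-product picture you set up does not even describe this example. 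Irrotationality is then nearly automatic: extremal points of $\rho(\hat f)$ are realized by recurrent points, which are either fixed or lie in $\hat U$, and disjointness of $\hat U$ from its translates forces their rotation vector to vanish. The chaotic dynamics is thus confined to a single invariant unbounded set of measure one, which is precisely the decoupling you were looking for and could not produce by perturbative means.
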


The construction is based on a simple topological idea of embedding an open topological disk $U\subset \T^2$ in a wild way so that $U$ has full Lebesgue measure and any lift of $U$ to the universal covering accumulates in every direction at infinity:
\begin{theorem} \label{th:weirdopen} There exists an open set $\hat{U}\subset \R^2$ such that 
\begin{enumerate}
\item $\hat{U}$ is connected and simply connected;
\item $U=\pi(\hat{U})$ is simply connected, or equivalently, $\hat{U}$ is disjoint from $\hat{U}+v$ for all $v\in \Z^2$, $v\neq (0,0)$;
\item $\hat{U}$ intersects every square $[0,1]^2 + v$, with $v\in \R^2$ (in particular, $\hat{U}$ accumulates in every direction at infinity).
\item $U$ has full Lebesgue measure in $\T^2$.
\end{enumerate}
\end{theorem}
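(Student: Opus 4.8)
The plan is to obtain $\hat U$ as an increasing union $\hat U=\bigcup_{n\ge1}\hat U_n$, where each $\hat U_n\subset\R^2$ is the lift through a fixed basepoint of an embedded open disk $U_n\subset\T^2$ with smooth boundary, and $U_1\subset U_2\subset\cdots$. Since $U_n$ is simply connected, the covering $\pi^{-1}(U_n)\to U_n$ is trivial, so its sheets are exactly the pairwise disjoint translates $\hat U_n+v$, $v\in\Z^2$; in particular $\hat U_n$ is disjoint from its nonzero translates, and this disjointness passes to the increasing union (if $\hat x\in\hat U\cap(\hat U+v)$ with $v\ne0$, then $\hat x\in\hat U_n\cap(\hat U_n+v)$ for $n$ large, which is impossible), so $\hat U$ is disjoint from its nonzero translates as well. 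From this, properties (1) and (2) are immediate: $\hat U$ is open and connected, every loop in it lies in some $\hat U_n$ and is therefore nullhomotopic there, so $\hat U$ is simply connected (and, being a proper subset of $\R^2$, even homeomorphic to an open disk), while $\pi|_{\hat U}$ is an injective covering map onto $U$, hence a homeomorphism, so $U$ is simply connected too. Thus everything reduces to arranging, in the limit, that $\Leb(\pi(\hat U_n))\to1$ (which gives (4)) and that $\hat U$ meets every unit square (which gives (3)). For the latter it is enough to meet, for all $k,l\in\Z$, the dyadic half-square $T_{k,l}=[\tfrac k2,\tfrac{k+1}2]\times[\tfrac l2,\tfrac{l+1}2]$, since every closed unit square $[0,1]^2+w$ has sides of length $1$ and hence contains some $T_{k,l}$; this is a countable list of requirements.

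The disks $U_n$ are enlarged by attaching \emph{fingers}. Take $U_1$ to be a small round disk. Given $U_n$, the set $\T^2\sm\ol{U_n}$ is a nonempty open connected surface, so its preimage $\R^2\sm\bigcup_{v\in\Z^2}(\ol{\hat U_n}+v)$ is open, connected, and, being $\Z^2$-invariant and nonempty, meets every unit square. A finger is built from a smooth arc $\gamma_n$ that starts at a point of $\bd\hat U_n$, then enters this complement (indeed stays in a slight interior erosion of it, so that a thin enough tubular neighbourhood still avoids all the translates $\ol{\hat U_n}+v$) and whose projection to $\T^2$ is an embedded arc; we let $U_{n+1}$ be $U_n$ together with a thin smooth neighbourhood of $\gamma_n$, attached along a single boundary subarc and rounded so that $\bd U_{n+1}$ stays smooth. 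Then $U_{n+1}$ is again an embedded open disk in $\T^2$. The geometric heart of the construction is the following elementary but somewhat delicate lemma, which I would isolate and prove first: inside any nonempty open connected subset $V$ of a surface, given any accessible boundary point $p$, any closed subset $F\subset V$ of small measure to be avoided, and any $\eta>0$, there is a smooth arc starting at $p$, staying in $V\sm F$, embedded in $\T^2$ with a positive tubular margin, whose $\eta$-neighbourhood covers all of $V$ except a set of measure $<\Leb(F)+\eta$ --- informally, a fine space-filling snake.

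It remains to schedule the requirements. I would keep two to-do lists: the \emph{reach} tasks ``$\hat U$ meets $T_{k,l}$'' and the \emph{measure} tasks ``$\Leb(\pi(\hat U_n))>1-1/m$'', and at each stage perform the first one still unfulfilled. A reach task for $T=T_{k,l}$ is carried out with a \emph{thin} finger: run an embedded arc from $\bd\hat U_n$ through $\R^2\sm\bigcup_v(\ol{\hat U_n}+v)$ to a point inside $T$ and thicken it slightly; once this is done, that point of $T$ belongs to $\hat U_m$ for every later $m$. A measure task is carried out with a \emph{fat}, space-filling finger furnished by the lemma, with one proviso: when filling $\T^2\sm\ol{U_n}$ we deliberately leave uncovered, in each of the four congruence classes modulo $\Z^2$ of the half-squares $T_{k,l}$, a small ball (these reserves shrink in measure as $n$ grows, so full measure in the limit is not harmed, and subsequent thin fingers are routed so as not to cover them). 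The point of the reserves is that they keep $\R^2\sm\bigcup_v(\ol{\hat U_n}+v)$ meeting every $T_{k,l}$, so that every reach task remains feasible no matter how much area has already been absorbed. Since each task is eventually the first unfulfilled one, in the limit $\Leb(\pi(\hat U))=1$ and $\hat U$ meets every half-square, hence every unit square, which completes the proof.

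I expect the main obstacle to be exactly the coexistence of the two kinds of steps together with the space-filling lemma: a fat step must absorb almost all the remaining area of the torus, yet must route around the finitely many translates of $\ol{\hat U_n}$ and around all previously attached thin fingers, must keep the projected core arc embedded with a definite margin so that the thickened finger is still embedded in $\T^2$, and must leave a shrinking reserve meeting all four half-square classes; conversely a thin step must always be able to escape to a possibly far-away half-square through a complement whose area has been made arbitrarily small. Each of these is routine in isolation, but the quantitative, smooth bookkeeping that makes them compatible at every finite stage is where the care is needed.
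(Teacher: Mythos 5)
Your strategy is sound and establishes (1)--(3) essentially as the paper does (increasing union of lifted embedded disks, arcs projecting injectively, rerouting around the finitely many translates of the current closure; your half-square reduction even treats the ``$v\in\R^2$'' version of (3) more explicitly than the paper). Where you genuinely diverge is property (4). The paper never controls measure inside the recursion: it first observes that (4) can be weakened to ``$\pi(\hat U)$ is dense,'' because the normalized Lebesgue measure on a dense open $U$ is non-atomic and positive on open sets, so the Oxtoby--Ulam theorem produces a homeomorphism $h$ carrying it to $\Leb_{\T^2}$ --- and, by the refined version of that theorem, $h$ can be chosen to lift to a map fixing $\bd([0,1]^2)$ pointwise, so it preserves each integer unit square and does not destroy property (3). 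After this reduction the recursion only has to thread a countable dense set of points $\hat x_{n_k}+v_k$ with thin fingers, i.e.\ only your ``reach'' tasks survive and all your ``measure'' tasks, reserves, and shrinking bookkeeping disappear. What your route buys is self-containedness (no appeal to Oxtoby--Ulam or to its boundary-fixing refinement); what it costs is that the entire weight of (4) now rests on the ``space-filling snake'' lemma, which you state but do not prove and correctly identify as the delicate heart.

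Two cautions about that lemma. First, as stated for an arbitrary closed $F\subset V$ of small measure it is false: $F$ can be a measure-zero separating curve, in which case an arc in $V\sm F$ issuing from $p$ cannot have its $\eta$-neighbourhood cover a component of $V\sm F$ of diameter much larger than $\eta$ on the other side. You should state it only for what you actually use ($F$ a finite union of small round balls chosen with room to spare), or add a hypothesis ensuring $V\sm F$ is connected and $\eta$-dense in $V$. Second, the quantitative interaction you flag at the end (fat fingers absorbing almost all area while remaining embedded in $\T^2$ with a tubular margin, avoiding the reserves, and keeping every future reach task feasible) is exactly the kind of bookkeeping the paper's Oxtoby--Ulam reduction is designed to sidestep; if you intend to carry your version through, the snake lemma and the scheduling argument both need to be written out in full before the proof can be considered complete.
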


To construct the example from Theorem \ref{th:example}, we consider the open set $U$ from the previous theorem and we ``glue'' in $U$ a diffeomorphism of the open unit disk which is Bernoulli and which extends to the closure of $U$ as the identity. The later step can be done by a slight modification of results due to Katok \cite{katok}.

It is important to emphasize that the example from Theorem \ref{th:example} is rather pathological. In particular, its fixed point set is very large, as it contains the boundary of $U$, which is a topologically rich continuum carrying the homology of $\T^2$ (what in \cite{essential} is called a \emph{fully essential continuum}). The topological richness is illustrated by the following Wada-type property in the universal covering: if $\hat{K} = \bd \hat{U}$ and we assume that $\inter(\cl(\hat{U})) = \hat{U}$ (which we can) then $\hat{K}$ is contained in the fixed point set of $\hat{f}$, invariant by $\Z^2$-translations, and further, $\hat{K}$ is the boundary of each of the sets $\hat{U}+v$, with $v\in \Z^2$ (each of which is a connected component of the complement of $\hat{K}$). 

In \cite{irrotational} the authors prove that this is necessarily the case for such an example (\ie an example which has unbounded orbits accumulating in more than one direction at infinity), and in particular no such example exists in the real analytic setting.

Using the terminology of \cite{essential}, Theorem \ref{th:example} also provides an example of a non-annular nonwandering diffeomorphism with an invariant unbounded disk, showing that all cases described in the main theorem of said article are possible even in the $C^\infty$ setting.

It may be interesting to note that, using the same technique, one can obtain the following:
\begin{theorem}\label{th:bounded} There is a $C^\infty$ area-preserving irrotational diffeomorphism $f\colon \T^2\to \T^2$ homotopic to the identity, with a lift $\hat{f}\colon \R^2\to \R^2$ such that 
\begin{enumerate}
\item[(1)] Every orbit of $\hat{f}$ is bounded, but
\item[(2)] for every finite set $\{v_1,\dots, v_n\}\subset \Z^2$, there is $z\in [0,1]^2$ such that the orbit of $z$ intersects $[0,1]^2+v_i$ for all $i\in\{1,\dots n\}$.
\end{enumerate}
\end{theorem}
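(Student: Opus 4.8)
The plan is to keep the wildly embedded disk $U=\pi(\hat U)$ produced by Theorem~\ref{th:weirdopen} (in fact only properties (1)--(3) of that theorem are needed here), and to glue into it a \emph{slow twist} in place of the Bernoulli diffeomorphism used for Theorem~\ref{th:example}: an area-preserving map all of whose orbits lie on invariant Jordan loops has only bounded orbits, yet one of those loops can be made large enough to meet any prescribed finite collection of fundamental domains, and an irrational rotation along it realizes this with a single orbit. First, since $\hat U$ is open, connected and simply connected it is diffeomorphic to $\R^2$; as $\hat U$ and $\R^2$ are one-ended and of infinite area, one can fix an \emph{area-preserving} diffeomorphism $\Phi\colon\R^2\to\hat U$ (by the Greene--Shiohama theorem; alternatively the twist below can be produced directly as the time-one map of a Hamiltonian flow on $\hat U$, which preserves area automatically). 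Running a variant of the construction of Theorem~\ref{th:weirdopen}, one may in addition assume that for every $v\in\Z^2$ the open set $\hat U\cap\inter([0,1]^2+v)$ has a connected component $F_v$ whose closure in $\R^2$ meets $\partial\hat U$; then $E_v:=\Phi^{-1}(F_v)$ is connected and unbounded, so, writing $(r,\theta)$ for polar coordinates on $\R^2$, the set of radii attained on $E_v$ is a half-line and $E_v$ meets the circle $C_r=\{|w|=r\}$ for every $r$ larger than some $\rho_v$.

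Next, fix a smooth $\epsilon\colon[0,\infty)\to(0,\infty)$ that is constant near $0$, strictly decreasing, and which tends to $0$ at infinity together with all of its derivatives ``fast enough'' (see the last paragraph), and set $G(r,\theta)=(r,\theta+\epsilon(r))$. This is a $C^\infty$ area-preserving diffeomorphism of $\R^2$ whose orbits all lie on the circles $C_r$, and the orbit on $C_r$ is dense precisely when $\epsilon(r)/2\pi\notin\Q$, hence for all but countably many $r$. Put $\hat g=\Phi\,G\,\Phi^{-1}$ on $\hat U$, extend it $\Z^2$-equivariantly over $\pi^{-1}(U)=\bigsqcup_{v\in\Z^2}(\hat U+v)$ and by the identity over $\R^2\sm\pi^{-1}(U)$, and call the result $\hat f$. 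Provided $\hat g$ agrees with the identity to infinite order along $\partial\hat U$ (together with all derivatives), $\hat f$ is a $C^\infty$ diffeomorphism commuting with $\Z^2$; it descends to a $C^\infty$ area-preserving diffeomorphism $f\colon\T^2\to\T^2$ homotopic to the identity (an isotopy to the identity coming from the twists $(r,\theta)\mapsto(r,\theta+t\epsilon(r))$, $t\in[0,1]$), and $\hat f$ is a lift of $f$.

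Now the three assertions. Orbits of $\hat f$ off $\pi^{-1}(U)$ are fixed points, and the orbit of a point $\hat x\in\hat U+v$ equals $v+\Phi(\{G^n(p)\}_{n})$ with $p=\Phi^{-1}(\hat x-v)$, which lies in the compact set $v+\Phi(C_{|p|})$; hence \emph{every orbit of $\hat f$ is bounded}. For irrotationality, the same description gives, writing $\rho=|p|$,
$$\norm{\hat f^{\,n}(\hat x)-\hat x}\ \le\ \min\!\bigl(D(\rho),\ C(\rho)\,\rho\,n\,\epsilon(\rho)\bigr),\qquad D(\rho)=\diam(\Phi(\{|w|\le\rho\})),\quad C(\rho)=\sup_{|w|\le\rho}\norm{D\Phi(w)},$$
the displacement being $0$ off $\pi^{-1}(U)$; choosing $\epsilon$ so that $C(\rho)\,\rho\,\epsilon(\rho)\to0$ as $\rho\to\infty$ (legitimate, since $C$ and $D$ depend only on the already fixed $\Phi$) forces $(\hat f^{\,n_k}(z_k)-z_k)/n_k\to0$ for every $n_k\to\infty$ and every sequence $(z_k)$ --- split any subsequence according to whether the associated radii stay bounded (then $D$ is bounded and $n_k\to\infty$ does the job) or escape (then $C(\rho)\rho\,\epsilon(\rho)\to0$ does it) --- so $\rho(\hat f)=\{(0,0)\}$. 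Finally, given $v_1,\dots,v_n$, put $F=\{(0,0),v_1,\dots,v_n\}$, choose $r>\max_{v\in F}\rho_v$ with $\epsilon(r)/2\pi\notin\Q$, take $w_0\in C_r\cap E_{(0,0)}$ and set $z=\Phi(w_0)\in\inter([0,1]^2)$; then $\{\hat f^{\,k}(z)\}_{k\ge0}=\{\Phi(G^k(w_0))\}$ is dense in $\Phi(C_r)$ and therefore meets each nonempty open set $\Phi(C_r\cap E_{v_i})\subset\inter([0,1]^2+v_i)$, which is conclusion (2).

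The one delicate point --- and the main obstacle --- is the hypothesis used in the second paragraph, that $\hat g=\Phi G\Phi^{-1}$ extends across $\partial\hat U$ as the identity in the $C^\infty$ sense. Approaching $\partial\hat U$ forces $|\Phi^{-1}(\hat x)|\to\infty$, so the twist angle $\epsilon(|\Phi^{-1}(\hat x)|)$ is tiny; but the derivatives of $\Phi$ and of $\Phi^{-1}$ near $\partial\hat U$ are not controlled in advance, so one must arrange $\epsilon$ and \emph{all} of its derivatives to decay at infinity faster than bounds that become available only a posteriori in terms of $\Phi$ --- a stronger, but similar, requirement to the one already imposed for irrotationality. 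Verifying that a single such $\epsilon$ can be chosen, and carrying out the variant of the construction of Theorem~\ref{th:weirdopen} needed in the first paragraph (a finger of $\hat U$ reaching $\partial\hat U$ inside each fundamental domain, compatibly with properties (1)--(3)), is where essentially all the work lies; the dynamical input is soft, as only one irrational rotation on one large invariant circle is used.
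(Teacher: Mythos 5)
Your overall strategy---replace the Bernoulli disk map of Theorem \ref{th:example} by an integrable twist whose orbits lie on invariant circles accumulating at the boundary of $U$---is exactly the paper's. However, there is a concrete error at the foundation of your version: no area-preserving diffeomorphism $\Phi\colon\R^2\to\hat U$ exists. Since $\hat U$ is disjoint from its translates by $\Z^2_*$, the projection $\pi|_{\hat U}$ is injective, so $\Leb(\hat U)=\Leb_{\T^2}(U)\leq 1<\infty$, whereas $\Leb(\R^2)=\infty$; the Greene--Shiohama theorem requires equal total volumes (and compatible behaviour on ends), so it does not apply here, and your assertion that $\hat U$ has infinite area is false. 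Everything downstream that relies on conjugating the Lebesgue-preserving twist $G(r,\theta)=(r,\theta+\epsilon(r))$ by an area-preserving $\Phi$ (the area-preservation of $f$, the displacement bounds in terms of $C(\rho)$ and $D(\rho)$) therefore breaks. Your parenthetical fallback (realizing the twist as the time-one map of a Hamiltonian flow on $\hat U$) could in principle repair this, but then the invariant curves are level sets of a Hamiltonian rather than images of round circles, the rotation number on each closed orbit is the reciprocal of its period rather than $\epsilon(r)$, and none of this is carried out. The paper sidesteps the whole issue by taking the \emph{closed unit disk} as the model: a measure-preserving identification $h\colon\D\to\hat U$ does exist (Riemann mapping plus Greene--Shiohama, both sides now having the same finite area), and the twist $\til f(r,\theta)=(r,\theta+\phi(r))$ with $\phi(0)=0$, $\phi(1)=1$ lives on $\ol\D$.

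The second issue is the one you flag yourself: making the conjugated twist extend across $\bd\hat U$ as the identity in the $C^\infty$ sense. You correctly identify this as the crux but leave it open. In the paper it is not open: Proposition \ref{pro:ex-adm} (Katok's Proposition 1.1) supplies, from the identification $H\colon\D\to U$, an admissible sequence $\rho$ such that every $\til f\in\diff^\infty_\rho(\ol\D)$ glues $C^\infty$-smoothly with the identity on $\T^2\sm U$, and one only has to choose $\phi$ with derivatives decaying fast enough near $r=1$ to place the twist in $\diff^\infty_\rho(\ol\D)$. Finally, your ``finger'' modification of the construction of Theorem \ref{th:weirdopen} is unnecessary work: for $r$ close enough to $1$ the Jordan curve $h(C_r)\subset\hat U$ encloses a prescribed point of each square $[0,1]^2+v_i$, and it must then actually intersect that square, since otherwise the entire square would lie inside the curve and hence in $\hat U$, contradicting that $\hat U$ is disjoint from $\hat U+(1,0)$. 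This is how the paper deduces conclusion (2) directly from properties (1)--(3) of Theorem \ref{th:weirdopen}, with no change to $\hat U$.
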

In particular, in the above example all orbits are bounded but the deviations $\hat{f}^n(z)-z$ are not uniformly bounded in any direction.

\subsection*{Acknowledgements} The authors would like to thank P. Boyland for his interesting remarks.

%

\section{Notations}
\label{sec:quasiconvex-index}
We use the notation $\Z^2_*$ to represent the set $\Z^2\sm \{(0,0)\}$.  By a compact arc in $X$ we mean a continuous map $\gamma\colon[0,1]\to X$, and $[\gamma]$ denotes its image $\gamma([0,1])$.  We denote by $\omega = dx\wedge dy$ the usual volume form in $\R^2$, by $\Leb$ the measure induced by $\omega$ (\ie Lebesgue measure) and by $\Leb_{\T^2}$ the corresponding probability measure on $\T^2$. We say that $f\colon \T^2 \to \T^2$ is area-preserving if $f_*(\Leb_{\T^2}) = \Leb_{\T^2}$. If $f$ is a diffeomorphism, this is the same as saying that $f^*(\omega')=\omega'$, where $\omega'$ is the volume form induced in $\T^2$ by $\omega$.

\section{Theorem \ref{th:weirdopen}: construction of the unbounded disk $U$.}
\label{sec:weirdopen}


%
%
In this section we will prove Theorem \ref{th:weirdopen}. The proof is somewhat similar to the construction given in \cite{anosov-2} of an embedded line $[0,\infty)\to \T^2$ which lifts to a line in $\R^2$ accumulating on all directions at infinity (see \cite{anosov-survey} for a comprehensive survey on the limit behavior at infinity of the lift to the universal covering of injective curves on surfaces). In particular, combining the example of \cite{anosov-2} with some results from \cite{anosov-survey}, one can obtain an open set satisfying properties (1)-(3) from Theorem \ref{th:weirdopen}. However, the proof is somewhat indirect and we are not able to obtain property (4), which is necessary for our main result, so we will give an explicit construction.


We will use the following simple observation:
\begin{lemma}\label{lem:ex-aux} If $\hat{x}$ and $\hat{y}$ are points in $\R^2$ with $\pi(\hat{x})\neq \pi(\hat{y})$, then there is an arc $\gamma$ joining $\hat{x}$ to $\hat{y}$ such that $\pi\circ \gamma$ is injective.
\end{lemma}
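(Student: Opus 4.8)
The plan is to lift the problem to the torus. Since $\pi$ is a covering map, a path $\gamma$ in $\R^2$ satisfies ``$\pi\circ\gamma$ injective'' if and only if $\gamma$ is a lift of an embedded (injective) arc in $\T^2$; so it suffices to produce an embedded arc $\alpha$ in $\T^2$ from $a:=\pi(\hat x)$ to $b:=\pi(\hat y)$ whose lift starting at $\hat x$ terminates \emph{exactly} at $\hat y$, and then take $\gamma$ to be that lift. Now $a\neq b$ and $\T^2$ is a path-connected manifold, so there is \emph{some} embedded arc $\alpha_0$ in $\T^2$ from $a$ to $b$; its lift $\til\alpha_0$ starting at $\hat x$ ends at $\hat y+v_0$ for a unique $v_0\in\Z^2$. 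The whole difficulty is to correct this ``error'' $v_0$.

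For this I would use a point-pushing homeomorphism. Because $a\neq b$, the inclusion $\T^2\sm\{a\}\hookrightarrow\T^2$ induces a surjection on fundamental groups, so there is a loop $\ell\colon[0,1]\to\T^2$ based at $b$, of homotopy class $-v_0\in\pi_1(\T^2,b)=\Z^2$, whose image avoids $a$. Dragging $b$ along $\ell$ by an ambient isotopy supported in a neighborhood of the image of $\ell$ that misses $a$ yields an isotopy $(h_s)_{s\in[0,1]}$ of $\T^2$ with $h_0=\id$ and $h_s(b)=\ell(s)$; set $h:=h_1$. Then $h$ is isotopic to the identity, $h(a)=a$ and $h(b)=b$. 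Lifting the isotopy starting from $\hat h_0=\id$ gives $(\hat h_s)$; since $h_s(a)=a$ for every $s$, the path $s\mapsto\hat h_s(\hat x)$ is constant, so $\hat h:=\hat h_1$ is the lift of $h$ fixing $\hat x$; and since $s\mapsto\hat h_s(\hat y)$ is the lift of $\ell$ starting at $\hat y$, it ends at $\hat h(\hat y)=\hat y-v_0$. Finally, as $h$ is isotopic to the identity it acts trivially on $\pi_1(\T^2)$, hence $\hat h$ commutes with all translations by elements of $\Z^2$.

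Then I would take $\gamma:=\hat h\circ\til\alpha_0$. This is the lift starting at $\hat h(\hat x)=\hat x$ of the embedded arc $h\circ\alpha_0$ from $a$ to $b$, and it ends at $\hat h(\hat y+v_0)=\hat h(\hat y)+v_0=(\hat y-v_0)+v_0=\hat y$, where the first equality uses that $\hat h$ commutes with the translation by $v_0$. Since $h$ is a homeomorphism and $\alpha_0$ is injective, $\pi\circ\gamma=h\circ\alpha_0$ is injective, so $\gamma$ joins $\hat x$ to $\hat y$ and is as required.

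The step requiring the most care is the point-pushing construction and the bookkeeping of which lift of $h$ one obtains; this is also exactly where the hypothesis $\pi(\hat x)\neq\pi(\hat y)$ enters essentially, since it is what lets the dragging loop (and the support of the isotopy) avoid $a$, thereby keeping $\hat x$ fixed. An alternative, fully explicit but more computational route would construct $\gamma$ by hand as a concatenation of segments of carefully chosen irrational slopes, checking that distinct pieces contain no pair of points differing by an integer vector; I find the isotopy argument cleaner and less error-prone.
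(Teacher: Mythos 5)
Your proof is correct, but it takes a genuinely different route from the paper's. The paper works entirely upstairs and explicitly: it takes the straight segment $L$ from $\hat{x}$ to $\hat{y}$, observes that any collision $L\cap(L+(p,q))\neq\emptyset$ forces $(p,q)$ to be parallel to $L$, reduces to the case $(p,q)=(1,0)$ by an $\SL(2,\Z)$ change of coordinates, and then writes down an explicit sine-perturbation of the horizontal segment whose disjointness from all its integer translates is verified by direct computation. You instead work downstairs: take any embedded arc $\alpha_0$ from $a$ to $b$ in $\T^2$, record the deck-transformation ``error'' $v_0$ in the endpoint of its lift, and kill it by composing with a point-pushing homeomorphism $h$ isotopic to the identity, supported off $a$, whose lift fixing $\hat{x}$ moves $\hat{y}$ by $-v_0$; the key facts are that $\pi_1(\T^2\sm\{a\})\to\pi_1(\T^2)$ is onto (this is where $a\neq b$ enters, exactly as you say) and that a lift of a map acting trivially on $\pi_1$ commutes with deck transformations. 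Your bookkeeping of which lift of $h$ is obtained, and of the endpoint $\hat h(\hat y+v_0)=\hat y$, is accurate. The trade-off: the paper's argument is elementary and self-contained (no isotopy extension, no lifting of isotopies), at the cost of a case analysis and a computation; yours is shorter conceptually, avoids the $\SL(2,\Z)$ reduction entirely, and generalizes verbatim to coverings of other surfaces, at the cost of invoking the standard (but not proved here) point-pushing construction. One small point of care: choose $\ell$ smooth or piecewise linear from the outset (possible by general position within $\T^2\sm\{a\}$) so that the finger-pushing isotopy supported in a small neighborhood of $[\ell]$ is the standard one; for a merely continuous loop one would first have to replace it by a homotopic nice representative.
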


\begin{proof} We need to find a simple arc $\gamma$ joining $\hat{x}$ to $\hat{y}$ such that $[\gamma]$ is disjoint from $[\gamma]+v$ for all nonzero $v\in \Z^2$. 
Let $L$ be the straight segment joining $\hat{x}$ to $\hat{y}$. If $L$ is disjoint from $L+v$ for all nonzero $v\in \Z^2$ (for example if $L$ has irrational slope), then a parametrization of $L$ gives the required arc. Now suppose that $L\cap (L+(p,q))\neq \emptyset$ for some $(p,q)\in \Z^2$. 
Without loss of generality, we may assume that $\hat{x}+(p,q)\in L$, and $p$ and $q$ are coprime, so that there are integers $a,b$ such that $ap+bq=1$. Using a linear change of coordinates in $\SL(2,\Z)$ given by
$$A =  \left(
    \begin{matrix}
      a & b \\ -q & p
    \end{matrix}
  \right).$$
we reduce the problem to the case $v=(1,0)$. In that case, we can write $\hat{x} = (a, c)$ and $\hat{y}=(b,c)$, and $L$ is a horizontal segment. Assume for simplicity that $a<b$, and define explicitly, for $t\in [0,1]$,
$$\gamma(t) = \left(a+t(b-a),\, c+\frac{1}{4}\abs{\sin(\pi t(b-a))} - \frac{t}{4}\abs{\sin(\pi (b-a))}\right).$$
It is clear that if $(v_1,v_2)\in \Z^2$ with $v_2\neq 0$ then $[\gamma]$ is disjoint from $[\gamma]+(v_1, v_2)$ (because the vertical width of $\gamma$ is less than $1/2$). It remains to show that $[\gamma]+(v_1,0)$ is disjoint from $[\gamma]$ if $0\neq v_1\in \Z$. Suppose for contradiction that $\gamma(s) = \gamma(t)+v_1$. Equating the first coordinates we see that $s=t+v_1/(b-a)$, while equating the second coordinates and using the fact that $v_1\in \Z$ one finds that $(v_1/(b-a))\abs{\sin(\pi(b-a))} =0$. This is only possible if $v_1=0$, because $b-a$ is not an integer due to the fact that $\hat{y}-\hat{x}\notin \Z^2$. This completes the proof.
\end{proof}

\begin{proof}[Proof of Theorem \ref{th:weirdopen}]

We first observe that we can replace item (4) by a weaker condition, namely that $\pi(\hat{U})$ is dense in $\T^2$. To see this, suppose that we have found an open simply connected set $\hat{U}$ satisfying items (1)-(3) of the theorem, and such that $U=\pi(\hat{U})$ is dense in $\T^2$. The normalized Lebesgue measure in $U$, \ie the Borel measure defined by $\mu(E) = \Leb_{\T^2}(E\cap U)/\Leb_{\T^2}(U)$, is positive on open sets and non-atomic, so by a classical result of Oxtoby and Ulam \cite{oxtoby-ulam}, there is a homeomorphism $h\colon \T^2\to \T^2$ such that $h_*(\mu) = \Leb_{\T^2}$, so $h(U)$ has full Lebesgue measure. Furthermore, by Theorem 2 of \cite{oxtoby-ulam}, $h$ can be chosen such that it lifts to a homeomorphism $\hat{h}\colon \R^2\to \R^2$ which leaves the boundary of the unit square $[0,1]^2$ pointwise fixed. 

This means that $h$ is homotopic to the identity and $\hat{h}$ leaves invariant any square $[0,1]^2+v$, with $v\in \Z^2$. In particular, since $\hat{U}$ intersects every square $[0,1]^2+v$ with $v\in \Z^2$, so does $\hat{U}'=\hat{h}(\hat{U})$.  Since $\hat{U}'$ is clearly connected, simply connected, and disjoint form its translates by elements of $\Z^2_*$, and $\pi(\hat{U}')= h(U)$ has full Lebesgue measure, we see that $\hat{U}'$ is an open set satisfying all four items (1)-(4).

It remains to find $\hat{U}$ as described, \ie satisfying (1)-(3) and such that $\pi(\hat{U})$ is dense. We construct it recursively. First fix a countable dense set $\{x_n\}_{n\in \N}$ in $\T^2$, with $x_i\neq x_j$ for $i\neq j$, and let $\{v_n\}_{n\in \N}$ be an enumeration of $\Z^2$. Let $\hat{x}_n$ be the unique element of $\pi^{-1}(x_n)\cap [0,1)^2$.
We will find $\hat{U}$ such that $\pi(\hat{U})$ is connected, simply connected and contains $\{x_n\}_{n\in \N}$ (so it is dense), and moreover, such that for each $k$ there is $n_k$ such that $\hat x_{n_k}+v_k\in \hat{U}$. This will imply that $\hat{U}$ intersects $[0,1)^2 + v_k$, guaranteeing the required properties.

\emph{Step 0.} Let $n_0=1$. By Lemma $\ref{lem:ex-aux}$, we may choose a compact arc $\gamma_0$ joining $\hat{x}_0$ to $\hat{x}_{n_0}+v_0$ such that $\pi\circ \gamma_0$ is injective (\ie it is a simple arc). This implies that we can find a neighborhood $\hat{U}_0$ of $\gamma_0$ such that $\pi(\cl{\hat{U}}_0)$ is homeomorphic to a closed disk $\ol{\D}$. 

\emph{Step 1.} Since $\cl{\hat{U}}_0$ is disjoint from its integer translates, one can find a simply connected neighborhood $V$ of $\cl{\hat{U}}_0$ that is disjoint from $V+v$ for any $v\in \Z^2_*$. Let $n_1$ be the smallest integer such that $x_{n_1}\notin \pi(\cl{\hat{U}_0})$. Lemma \ref{lem:ex-aux} implies that there is a compact arc $\gamma_1$ joining $\hat{x}_0$ to $\hat x_{n_1}+v_1$ such that $\pi\circ \gamma_1$ is simple.
If $\gamma_1$ intersects $\cl\hat{U}_0+v$ for some $v\in \Z^2_*$, we can modify $\gamma_1$ by removing the subarc between the first intersection $z$ of $\gamma_1$ with $\ol{V}+v$ and the last intersection $z'$ of $\gamma_1$ with $\ol{V}+v$, and replacing it by a simple arc joining $z$ to $z'$ and contained (except for its endpoints) in the annulus $(V+v)\sm (\cl\hat{U}_0+v)$. The new arc still projects to a simple arc in $\T^2$, and it does not intersect $\hat{U}_0+v$. Since there are at most finitely many choices of $v\in \Z^2$ such that $\cl\hat{U}_0+v$ intersects $\gamma_1$, by repeating this process we may assume that $\gamma_1$ does not intersect $\cl\hat{U}_0+v$ for any $v\neq(0,0)$.

This implies that $K=\cl{\hat{U}_0}\cup \gamma_1$ is a compact set and $K+v$ is disjoint from $K$ for all $v\in \Z^2_*$. If $W$ is a sufficiently small connected neighborhood of $K$ then $\ol{W}$ is disjoint for $\ol{W}+v$ for any  $v\in \Z^2_*$ as well. Define $\hat{U}_1$ as the union of $W$ with all bounded components of $\R^2\sm W$. Then $\hat{U}_1$ is open, simply connected and it still holds that $\cl\hat{U}_1$ is disjoint from $\cl\hat{U}_1+v$ for any $v\in \Z^2_*$.

\emph{Step k+1.} Suppose we have defined a sequence $\hat{U}_0\subset \hat{U}_1\subset \cdots \subset \hat{U}_k$ of open simply connected sets and a sequence $n_0<n_1<\cdots<n_k$ such that, for $i\leq k$, 
\begin{itemize}
\item $\hat{U}_i$ is open, connected, simply connected;
\item $\cl\hat{U}_i$ is disjoint from $\cl\hat{U}_i+v$ for all $v\in \Z^2_*$;
\item $x_j\in \cl\pi(\hat{U}_i)$ for $0\leq j<n_i$;
\item $\hat{x}_{n_i}+v_i\in \hat{U}_i$.
\end{itemize}

To obtain $\hat{U}_{k+1}$ we repeat what was done in Step 1: fix a larger simply connected neighborhood $V$ of $\hat{U}_k$ such that $V\cap (V+v)=\emptyset$ for $v\in \Z^2$, define $n_{k+1}$ as the smallest integer such that $x_{n_{k+1}}\notin \pi(\cl{\hat{U}_k})$, and choose an arc $\gamma_{k+1}$ joining $\hat{x}_0$ to $\hat{x}_{n_{k+1}}$. We may assume that $\gamma_{k+1}$ is disjoint from $\cl{\hat{U}_k}+v$ by modifying $\gamma_{k+1}$ as before: for each $v\in \Z^2_*$ such that $\gamma_{k+1}$ intersects $\cl{\hat{U}_k}+v$ (note that there are finitely many such values of $v$), replace the part of $\gamma_{k+1}$ between the first and last intersection of $\gamma_{k+1}$ with $\ol{V}
+v$ by a simple arc in $(V+v)\sm (\cl\hat{U}_k+v)$ joining the same two points.

Finally, we let $K = \cl{\hat{U}_k}\cup \gamma_{k+1}$, we choose a small connected neighborhood $W$ of $K$ such that $\ol{W}\cap (\ol{W}+v)=\emptyset$ for $v\in \Z^2_*$, and we define $\hat{U}_{k+1}$ as the union of $W$ with all the bounded components of $\hat{U}_{k+1}$. One readily verifies that $\hat{U}_{k+1}$ has the required properties to continue 
with the recursion.

Defining $\hat{U}= \bigcup_{k\in \N} \hat{U}_k$, it is clear from the construction that it has the required properties, completing the proof.
\end{proof}

\section{Construction of the examples: Theorems \ref{th:example} and \ref{th:bounded}}

Let us first introduce some definitions from \cite{katok}. We say that a sequence $\rho = (\rho_n)_{n\in \N}$ of real valued continuous functions of the closed unit disc $\ol{\D}$ is admissible if each $\rho_n$ is positive in the open disk $\D$, and we define $C^\infty_\rho(\ol{\D})$ as the set of all $C^\infty$ functions $h\colon \ol{\D}\to \R$ such that for all $n\geq 0$ there is $\epsilon_n>0$ such that at points within a distance $\epsilon_n$ of $\bd \D$, the partial derivatives of $h$ of orders at most $n$ are bounded by $\rho_n$; \ie, whenever $\norm{(x_1,x_2)}\geq 1-\epsilon_n$ and $i+j\leq n$, $i,j\geq 0$, we have 
$$\abs{\partial_{x_1}^i\partial_{x_2}^j h(x_1,x_2)}\leq \rho_n(x_1,x_2).$$ 

We may now define $\diff^\infty_\rho(\ol{\D})$ as the set of all $C^\infty$ diffeomorphisms $f\colon \ol{\D}\to \ol{\D}$ such that the two coordinate functions of $(f-\id)$ lie in $C^\infty_\rho(\ol{\D})$.

\begin{theorem}[\cite{katok}]\label{th:katok} For every admissible sequence of functions $\rho$ on $\ol{\D}$, there exists a diffeomorphism $g\in \diff^\infty_\rho(\ol{\D})$ which preserves the normalized Lebesgue measure such that $(f,\frac{1}{\pi}\Leb|_{\ol{\D}})$ is metrically isomorphic to a Bernoulli shift.
\end{theorem}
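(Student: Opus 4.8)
The plan is to deduce Theorem~\ref{th:katok} from Katok's original construction \cite{katok} essentially as a black box, with the only modification being a control of the behaviour of the diffeomorphism near the boundary of the disk. Katok's theorem produces, for any prescribed admissible sequence $\rho$, a $C^\infty$ area-preserving diffeomorphism of $\ol{\D}$ which is equal to the identity on $\bd\D$ (in fact infinitely tangent to the identity there, with the tangency controlled by $\rho$) and which is metrically isomorphic to a Bernoulli shift with respect to normalized Lebesgue measure. So the content here is really just the statement that Katok's method is compatible with the constraint that $g-\id\in C^\infty_\rho(\ol{\D})$ for a given $\rho$; the isomorphism-to-a-Bernoulli-shift conclusion is exactly what Katok proves.

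First I would recall the structure of Katok's construction. One starts with a linear automorphism (or a time-one map of a suitable flow) on a square or annulus, and then conjugates by a carefully chosen sequence of measure-preserving diffeomorphisms supported in shrinking neighbourhoods, producing in the limit a smooth area-preserving diffeomorphism with positive entropy; a further argument (Pesin theory plus the Ornstein-type criterion, or a direct coding) upgrades ergodicity and the Bernoulli property. To fit this onto the disk with the desired boundary behaviour, one uses a smooth measure-preserving identification of the interior of $\ol{\D}$ (with normalized Lebesgue measure) with the interior of the square, chosen so that it degenerates appropriately near the boundary, and transports Katok's map through it; alternatively, one works directly on $\ol{\D}$ using the fact that Katok's perturbations can be taken supported away from $\bd\D$. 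The key point to check is that the successive conjugating diffeomorphisms, and hence the limit $g$, can be arranged so that all partial derivatives of $g-\id$ of order $\leq n$ decay faster than the prescribed $\rho_n$ as one approaches $\bd\D$. Since $\rho$ is an arbitrary admissible sequence, this amounts to inserting, before running Katok's scheme, a fixed ``reference'' conjugacy or cutoff that already forces $g-\id$ to vanish to infinite order at $\bd\D$ at a rate dominating $\rho$, and then noting that all later perturbations are supported in a fixed compact subset of $\D$ and hence do not affect the germ at the boundary.

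Concretely, the steps I would carry out are: (i) restate Katok's theorem in the form actually proved in \cite{katok} --- a $C^\infty$ area-preserving Bernoulli diffeomorphism of $\ol{\D}$ (or the square) which is the identity on the boundary and all of whose derivatives decay to the identity near the boundary; (ii) observe that the construction allows one to prescribe the rate of this decay, because the perturbations are compactly supported in the interior and the only boundary behaviour comes from an initial smoothing that can be made arbitrarily flat; (iii) given an admissible $\rho$, choose an auxiliary admissible sequence $\tilde\rho$ with $\tilde\rho_n$ sufficiently small near $\bd\D$ (and $\sum$-type summability conditions so that the infinite composition converges in $C^\infty$), run Katok's construction with $\tilde\rho$, and verify that the resulting $g$ satisfies $g-\id\in C^\infty_\rho(\ol{\D})$, i.e.\ $g\in\diff^\infty_\rho(\ol{\D})$; (iv) conclude that $(g,\tfrac1\pi\Leb|_{\ol{\D}})$ is metrically isomorphic to a Bernoulli shift, which is Katok's conclusion, unchanged. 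I expect the only genuine obstacle to be a bookkeeping one: making sure the $C^\infty$ convergence of the infinite composition of conjugacies is compatible, uniformly on $\ol{\D}$, with the boundary decay condition imposed by $\rho$ --- that is, that the tail of the construction does not spoil the flatness at $\bd\D$. This is handled by the support condition (all but the initial maps are supported in $\{\norm{z}\le 1-\epsilon_0\}$ for a fixed $\epsilon_0$), so on a neighbourhood of $\bd\D$ the map $g$ agrees with the initial flat map and the estimate is immediate.
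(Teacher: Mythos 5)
This statement is not proved in the paper at all: it is quoted verbatim from \cite{katok} (the $\diff^\infty_\rho$ formalism and the admissible-sequence hypothesis are Katok's own, and his main theorem is stated in exactly this form), so the ``proof'' in the paper is just the citation. Your proposal ultimately makes the same move --- defer to Katok --- and correctly identifies the one point that matters for the application, namely that the boundary flatness can be prescribed to dominate an \emph{arbitrary} admissible $\rho$. In that sense it is fine, and indeed it works harder than necessary, since no adaptation of Katok's result is required here (the only adaptation in the paper concerns Proposition~\ref{pro:ex-adm}, the open-versus-closed-disk issue, not this theorem).

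That said, your reconstruction of the internals of Katok's proof is not accurate, and if this sketch were meant to stand on its own it would be a gap. Katok's Bernoulli diffeomorphism of the disk is \emph{not} produced by an infinite sequence of measure-preserving conjugations supported in shrinking neighbourhoods --- that is the Anosov--Katok approximation-by-conjugation method, which yields zero-entropy examples and is a different construction. Katok instead slows down a hyperbolic automorphism of $\T^2$ near its fixed points by a time change, passes to the sphere via the degree-two branched cover, and removes/unfolds a fixed point to obtain the disk; the boundary circle arises from a degenerate fixed point, and the prescribed decay $\rho_n$ is achieved by choosing the slowdown function sufficiently flat, not by an initial cutoff followed by compactly supported perturbations. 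In particular, the picture in which $g$ agrees near $\bd\D$ with an ``initial flat map'' while ``all later perturbations'' live in a fixed compact subset misdescribes where the $\rho$-control comes from, and a genuinely compactly-supported-in-the-interior scheme would also have to explain why ergodicity extends to a full-measure set reaching the boundary. Since the theorem is being cited rather than reproved, none of this affects the logic of the paper, but the sketch should not be presented as Katok's argument.
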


We will use the following fact, which is essentially Proposition 1.1 of \cite{katok}, with the slight difference that for us $\D$ denotes the open disk instead of the closed disk (but it is proved exactly in the same way).

\begin{proposition}\label{pro:ex-adm} Let $S$ be a surface and $H\colon \D\to U\subset S$ be a $C^\infty$ diffeomorphism. Then there exists an admissible sequence of functions $\rho$ such that for every $\til{f}\in \diff^\infty_\rho(\ol{\D})$, the map $f\colon S\to S$ defined by
$$f(z)=\begin{cases} H\til{f}H^{-1}(z) &\text{ if }z\in U, \\ z& \text{ otherwise} \end{cases}$$
is a $C^\infty$ diffeomorphism.
\end{proposition}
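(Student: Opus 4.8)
The plan is to reduce everything to the local behavior of $f$ near $\bd U$, since away from $\bd U$ the map $f$ is obviously $C^\infty$: on the open set $U$ it coincides with the composition $H\til fH^{-1}$ of $C^\infty$ maps, and on the open set $S\sm\cl U$ it is the identity. The only issue is smoothness (and the diffeomorphism property) along $\bd U$, and there one must exploit the freedom in choosing $\rho$. The key idea is that $H\colon\D\to U$ need not extend continuously to $\ol\D$, but the \emph{derivatives} of $H^{-1}$, read through local charts of $S$, can grow at most in some controlled way as one approaches $\bd\D$; one then picks $\rho_n$ growing fast enough to ``beat'' that growth.

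First I would set up the estimates. Fix an exhaustion of $\D$ by compact disks $\ol{\D}_r=\{\norm z\le r\}$, $r\nearrow 1$. For each $r$, the set $H(\ol\D\sm \D_r)$ is a compact subset of $S$ shrinking toward $\bd U$; cover $\cl U$ by finitely many charts of $S$ and, for each chart, record the maximum over $\ol\D\sm\D_r$ of the norms of the partial derivatives of $H$, of $H^{-1}$, and of the inverse of the differential of $H$, up to order $n$. Call the resulting (finite) quantity $M_n(r)$; it is finite for each $r<1$ but may blow up as $r\to 1$. Now define $\rho_n$ on $\D$ so that $\rho_n$ is a positive continuous function with $\rho_n(x)\to 0$ fast enough as $\norm x\to 1$ — concretely, choosing for each $n$ a function that at distance $\le 1-r$ from $\bd\D$ is dominated by, say, $(1+M_{n}(r))^{-1}$ times a suitable power of $(1-r)$. (One must be slightly careful because ``$n$ derivatives of $f-\id$'' will, via the chain rule, involve derivatives of $H$ and $H^{-1}$ of order $\le n$ together with negative powers of the Jacobian of $H$; that is exactly what $M_n(r)$ is designed to absorb, and the extra power of $(1-r)$ is there to kill the finitely many combinatorial chain-rule constants and the lower-order terms uniformly.) Since each $\rho_n$ is positive on $\D$, the sequence $\rho=(\rho_n)$ is admissible.

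Next I would verify that for this $\rho$ and any $\til f\in\diff^\infty_\rho(\ol\D)$ the map $f$ is $C^\infty$. Since $\til f-\id$ has all partial derivatives bounded by $\rho_n$ near $\bd\D$, the chain-rule expansion of $H\til fH^{-1}-\id = H\til fH^{-1}-HH^{-1}$ shows that, in each local chart of $S$ meeting $\bd U$, all partial derivatives of $f-\id$ of order $\le n$ are bounded by $M_n(r)\cdot(\text{const})\cdot\rho_n$ evaluated at the corresponding point of $\D$, hence tend to $0$ uniformly as one approaches $\bd U$ from inside $U$. By the standard lemma on extending $C^\infty$ maps across a boundary — a function defined and $C^\infty$ on an open set, all of whose partial derivatives up to every order extend continuously by $0$ to the boundary, together with the identity on the other side, is globally $C^\infty$ — we conclude $f$ is $C^\infty$ on all of $S$. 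The same estimate applied to the differential shows $Df$ is the identity along $\bd U$ and close to the identity near $\bd U$, so $Df$ is everywhere invertible; since $f$ is the identity outside $U$ and a diffeomorphism $U\to U$ inside (as $\til f$ is a diffeomorphism of $\ol\D$ fixing $\bd\D$, so $H\til fH^{-1}$ extends $U\to U$ and matches the identity on $\bd U$), $f$ is a bijection of $S$ with smooth inverse, \ie a $C^\infty$ diffeomorphism. This is exactly the argument of Proposition 1.1 of \cite{katok}, adapted to the open disk.

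The main obstacle, and the only genuinely non-formal point, is the bookkeeping in the chain rule: one must be sure that controlling the partial derivatives of $\til f-\id$ of order $\le n$ really does control the partial derivatives of $f-\id$ of order $\le n$, and that the constants and lower-order contributions appearing in Faà di Bruno's formula (which involve derivatives of $H$ and $H^{-1}$ and negative powers of $\det DH$, all of which degenerate as $\norm z\to1$) can be uniformly absorbed by a single admissible choice of $\rho$ made \emph{before} $\til f$ is given. The resolution is the diagonal-type choice above: because for each fixed $n$ the offending quantity $M_n(r)$ depends only on $H$ and $n$, one simply defines $\rho_n$ to decay faster than $M_n$ grows. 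Everything else is routine, and indeed the statement is essentially quoted from \cite{katok}.
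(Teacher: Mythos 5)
Your argument is correct and is essentially the proof of Proposition 1.1 of \cite{katok}, which is all the paper itself does here (it cites that result and notes the proof carries over verbatim to the open disk). The diagonal choice of $\rho_n$ decaying faster than the growth of the chart-wise derivative bounds $M_n(r)$ of $H$ and $H^{-1}$, followed by the chain-rule estimate and the standard ``all partials of $f-\id$ extend by $0$'' gluing lemma, is exactly the intended argument.
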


\subsection{Proof of Theorem \ref{th:example}} \label{sec:ex-proof}

Consider the set $\hat{U}$ from Theorem \ref{th:weirdopen}. There exists a $C^\infty$ (orientation preserving) diffeomorphism $h\colon \D\to \hat{U}$ such that $h^*(\omega|_{\hat{U}}) = \frac{1}{\pi}\omega|_\D$, where $\omega = dx\wedge dy$ is the Lebesgue volume form. This can be seen as follows: from the Riemann mapping theorem we know there is a $C^\infty$ diffeomorphism $\til{h}\colon \D \to \hat{U}$ preserving orientation.  Note that $\int_{\hat{U}} \omega =1$, so the $C^\infty$ volume form $\alpha = \til{h}^*(\omega|_{\hat{U}})$ on $\D$ satisfies $\int_\D \alpha =1 = \int_\D \frac{1}{\pi}\omega|_\D$. Thus, the main theorem of \cite{greene-shiohama} guarantees that there is a $C^\infty$ diffeomorphism $\phi \colon \D \to \D$ such that $\phi^*(\alpha) = \frac{1}{\pi}\omega|_\D$.
Letting $h = \til{h}\phi$, it follows that $h^*(\omega|_{\hat{U}}) = \frac{1}{\pi}\omega|_{\D}$, as we wanted.

Let $U=\pi(\hat{U})$, and let $\rho$ be the admissible sequence obtained from Proposition \ref{pro:ex-adm} using $H = \pi|_{\hat{U}}h$. Note that $\phi|_{\hat{U}}$ is injective, and if $\omega_{\T^2}$ denotes the lebesgue volume form on $\T^2$ (defined by $\omega'(z) = \omega(\hat{z})$ where $\hat{z}\in \pi^{-1}(z)$), then $H^*(\omega'|_U) = \frac{1}{\pi}\omega|_{\D}$. This in particular implies that if $\til{f}\colon \D\to \D$ is an area-preserving diffeomorphism, then $H\til{f}H^{-1}\colon U\to U$ is also area-preserving.

Theorem \ref{th:katok} guarantees that there is a $C^\infty$ diffeomorphism $\til{f}\colon \ol{\D}\to \ol{\D}$ preserving the normalized Lebesgue measure $\frac{1}{\pi}\Leb|_{\ol{\D}}$, and such that $(\til{f},\frac{1}{\pi}\Leb_{\ol{\D}})$ is Bernoulli. The map $f$ defined as in Proposition \ref{pro:ex-adm} is a $C^\infty$ diffeomorphism such that $f|_U=H\til{f}H^{-1}$ (and $f$ fixes points of the complement of $U$), and from the fact that $U$ has full Lebesgue measure and our previous observations it follows that $f$ is area-preserving. Moreover, $f$ is metrically isomorphic to $\til{f}$ and therefore to a Bernoulli shift. In particular, $f$ is ergodic, and so Lebesgue almost every point in $\T^2$ has a dense positive $f$-orbit. If $\hat{f}\colon \R^2\to \R^2$ is the lift of $f$ that that leaves $\hat{U}$ invariant, then $\pi|_{\hat{U}}$ is a homeomorphism onto $U$ which preserves Lebesgue measure such that $\pi|_{\hat{U}}\hat{f} = f\pi|_{\hat{U}}$, so that the positive $\hat{f}$-orbit of Lebesgue almost every point in $\hat{U}$ is dense in $\hat{U}$. In particular, since $\hat{U}$ accumulates in all directions at infinity, this implies that the orbit of Lebesgue almost every point in $\hat{U}$ accumulates in all directions at infinity. Since this also holds for any integer translation of $\hat{U}$, and the complement of $\bigcup_{v\in \Z^2} \hat{U}+v$ has null Lebesgue measure, we conclude the last claim of the theorem.

It remains to show that $\rho(\hat{f}) = \{(0,0)\}$.  Let $w\in \R^2$ be an extremal point of $\rho(\hat{f})$. From \cite{m-z} it is known that for any such extremal point one can find an $f$-recurrent point $z\in \T^2$ such that if $\hat{z}\in \pi^{-1}(z)$ then $(\hat{f}^n(\hat{z})-\hat{z})/n\to w$ as $n\to \infty$. Suppose $\hat{z}$ is not fixed, so that $\hat{z}\in \hat{U}$, and fix $\epsilon>0$ such that $B_\epsilon(\hat{z})\subset \hat{U}$. Since $z$ is recurrent, there is a sequence $n_k\to \infty$ such that $f^{n_k}(z)\in B_\epsilon(z)$, so that there are $v_k\in \Z^2$ such that $\hat{f}^{n_k}(\hat{z})\in B_\epsilon(\hat{z})+v_k$. But since $\hat{z}\in \hat{U}$ it follows that $f^n(\hat{z})\in \hat{U}$ for any $n\in \Z$, and since $B_\epsilon(\hat{z})\subset \hat{U}$ and $\hat{U}$ is disjoint from its integer translations, one concludes that $v_k=(0,0)$. Thus $\norm{\smash{(\hat{f}^{n_k}(\hat{z})-\hat{z})/n_k}}\leq \epsilon/n_k \to 0$ as $k\to \infty$, which implies that $w=(0,0)$. We conclude that the only extremal point of $\rho(\hat{f})$ is $(0,0)$, and since the rotation set is convex it follows that $\rho(\hat{f})=\{(0,0)\}$, as we wanted to show.\qed

\subsection{Idea of the proof of theorem \ref{th:bounded}} The proof is straightforward using the same ideas from the previous section: we consider the open set $U$ from Theorem $\ref{th:weirdopen}$, and define $f$ such that $f(U)=U$ and $f$ fixes $\T^2\sm U$ pointwise. Instead of using the Bernoulli system from Theorem \ref{th:katok} as a model, we use an elliptic diffeomorphism: $\til{f}\colon \ol{\D}\to \ol{\D}$ defined in polar coordinates as $\til{f}(r,\theta) = (r, \theta+\phi(r))$ where $\phi\colon[0,1]\to \R$ is a $C^\infty$ increasing function such that $\phi(0)=0$, $\phi(1)=1$ and $0<\phi(\theta)<1$ when $0<\theta<1$. If $\phi$ is chosen appropriately, we may guarantee that $\til{f}\in \diff^\infty_\rho(\ol{\D})$ for the admissible sequence $\rho$ given by Proposition \ref{pro:ex-adm}, and so we are able to finish the construction as in the previous section. The resulting map $f$ on $\T^2$ has a sequence of invariant topological circles accumulating on the boundary of $U$ and such that the restriction of $f$ to each circle is an irrational rotation. Since $\hat{U}$ (and its boundary) intersects every set of the form $[0,1]^2+v$ with $v\in \Z^2$, the claims from the theorem follow easily.\qed

\bibliographystyle{koro} 
\bibliography{irrotational}

\end{document}